\def\be{\begin{eqnarray}}
\def\ee{\end{eqnarray}}
\def\b*{\begin{eqnarray*}}
\def\e*{\end{eqnarray*}}
\def \E{\mathbb{E}}
\def \N{\mathbb{N}}
\def \R{\mathbb{R}}
\def \S{\mathbb{S}}
\def\Cc{{\mathcal C}}
\def\Dc{{\mathcal D}}
\def\Ec{{\mathcal E}}
\def\Lc{{\mathcal L}}
\def\Oc{{\mathcal O}}
\def\Pc{{\mathcal P}}
\def\Tc{{\mathcal T}}
\def\Wc{{\mathcal W}}
\def \eps {\varepsilon}
\def \sb {\subset}
\def \bu{\mathsf{u}}
\def \bv{\mathsf{v}}
\def \bx{\mathsf{x}}
\def \by{\mathsf{y}}
\def \bz{\mathsf{z}}
\def \sW{\msout{\mathcal{W}}}
\def \mW{\overline{\mathcal{W}}}
\def \lip{\mathsf{lip}}
\def \Lee{\mathsf{Lip}}
\def \hmu{\hat{\mu}}
\def \0{\mathsf{0}}
\def \1{\mathsf{1}}
\newcommand{\stkout}[1]{\ifmmode\mbox{\sout{\ensuremath{#1}}}\else\sout{#1}\fi}
\newcommand{\msout}[1]{\text{\sout{\ensuremath{#1}}}}
\newtheorem{theorem}{Theorem}[section] % 1st argument is your name for it
\newtheorem{lemma}{Lemma}[section]    % 2nd argument is what is printed
\newtheorem{proposition}{Proposition}[section]
\newtheorem{definition}{Definition}[section]
\newtheorem{remark}{Remark}[section]
\title[From Wasserstein to max-sliced Wasserstein]% end with percent
 {Strong equivalence between metrics of Wasserstein type} % This is the full title of the paper
\author{\sc Erhan Bayraktar and Gaoyue Guo}
\begin{document}
\maketitle

\begin{abstract}
The sliced Wasserstein metric $\sW_p$ and more recently max-sliced Wasserstein metric  $\mW_p$ have attracted abundant attention in data sciences and machine learning due to their advantages to tackle the curse of dimensionality, see e.g. \cite{10.1007/978-3-642-24785-9_37}, \cite{DHSPSKZFS}. 
A question of particular importance is the strong equivalence between these projected Wasserstein metrics and the (classical) Wasserstein metric $\Wc_p$. Recently, Paty and Cuturi have proved in \cite{PC} the strong equivalence of $\mW_2$ and $\Wc_2$.  We show that the strong equivalence also holds for $p=1$, while the sliced Wasserstein metric does not share this nice property.
\end{abstract}

\section{Introduction}\label{sec:intro}

\noindent The Wasserstein metric arising in the optimal transport theory  forms a distance function between probability measures. In  mathematical language, the Wasserstein distance of order $p\ge 1$ between  probability measures $\mu$ and $\nu$ on $\R^d$ is defined as
\b* 
\Wc_p(\mu,\nu) &:=& \inf_{\gamma\in\Gamma(\mu,\nu)}  \left(\int_{\R^d\times\R^d} |\bx-\by|^p\gamma(d\bx,d\by)\right)^{1/p},
\e* 
where $\Gamma(\mu,\nu)$ is the set of probability  measures $\gamma$ on $\R^d\times\R^d$ having the  marginal distributions $\mu$ and $\nu$. Theoretical advances in the last fifty years characterize existence, uniqueness, representation and smoothness properties of optimizers for $\Wc_p(\mu,\nu)$ under different settings and compute $\Wc_p(\mu,\nu)$ by adopting tools and methods in PDE, linear programming and computational geometry, see e.g. \cite{RR1}, \cite{Villani}, and applications are various  throughout most of the applied sciences including economics, geography and biomedical sciences, see e.g. \cite{RR2},  \cite{Santambrogio}. Recently, it has attracted abundant attention in data sciences and machine learning due to its  theoretical properties and applications on many domains, see e.g. Wasserstein GAN (Generative Adversarial Network) in \cite{ACB}. While the Wasserstein metric brings new perspectives and principled ways to formalize problems, the related methods usually suffer from high computational complexity as  evaluating Wasserstein distance for $d\gg 1$ is numerically intractable in general. This important computational burden is a major limiting factor in the application of Wasserstein metric to large-scale data analysis.
An appealing path to overcome the curse of dimensionality is the recently introduced sliced Wasserstein metric $\sW_p$,  which is based on the average Wasserstein distance of the projections of two distributions, see e.g. \cite{10.1007/978-3-642-24785-9_37}, \cite{Bonnotte},  \cite{Santambrogio}.
Very recently, in order to reduce the projection complexity of the sliced Wasserstein, \cite{DHSPSKZFS}
introduced the so-called max-Wasserstein metrics, which we will denote by $\mW_p$, as a fix. The same paper also points out that both of these projected versions of the Wasserstein distance enjoy the so-called generalizability over the Wasserstein metric. For further recent results please refer to \cite{KNSBR}, \cite{PC}.

Paty and Cuturi showed in \cite{PC} that the max-sliced distance $\mW_2$ is strongly equivalent to $\Wc_2$. This paper aims to prove  this result  for $p=1$.  The proof of our result is based on  the dual formulation of $\Wc_1$, constructing a tailor-made topology $\tau$ on the space of Lipschitz functions on $\R^d$, and some functional analytic arguments. This is reminiscent of the universal approximation result in e.g. \cite{Cybenko}, i.e. any arbitrary Lipschitz function can be recovered from functional evaluation of projections. Although in the same spirit, our result here is different. In Theorem \ref{prop:inclusion}, we prove there exists $C_d>0$ such that the collection of functions as below 
\be\label{eq:approx}
\sum_{1\le k\le n} a_k f_k(\bv_k\cdot \bx)
\ee
is dense, endowed with $\tau$, in the absorbing and convex set of $1-$Lipschitz functions $\Lee_1(\R^d)$, where $n\in\N$, $a_k\ge 0$, $\bv_k\in\S^{d-1}$,    $f_k\in \Lee_1(\R)$ for $1\le k\le n$ and $\sum_{1\le k\le n}a_k\le C_d$. Roughly speaking, any $1-$Lipschitz function on $\R^d$ can be approximated by a sequence of $C_d-$Lipschitz functions of form \eqref{eq:approx}.
 
We show further that the strong equivalence is not shared by the sliced Wasserstein metric using the recent results of \cite{AGT}, hence promoting the max-sliced metric over the sliced one.

The structure of the rest of the paper is simple. In the next section, after introducing some preliminaries we will give our main results in Theorem~\ref{thm:equivalence}.  Section~\ref{sec:proof}, on the other hand, is devoted to the proof of these results. A  technical lemma is presented in the Appendix.

\section{Main Results}\label{sec:pre}

\subsection{Preliminaries on the Wasserstein Metric}
\noindent We start by reviewing  the preliminary concepts and formulations needed to introduce the main results. For $p\ge 1$, let  $\Pc_p(\R^d)$ be the set of probability measures on $\R^d$ admitting finite $p^{\rm th}$ moment, i.e.   $\mu\in \Pc_p(\R^d)$ if and only if  
\be\label{def:moment} 
M_p(\mu) := \left(\int_{\R^d}|\bx|^p\mu(d\bx)\right)^{1/p} < \infty.
\ee    
For $\mu,\nu\in\Pc_p(\R^d)$,  denote by $\Gamma(\mu,\nu)\sb\Pc_p(\R^d\times\R^d)$ the collection of probability measures $\gamma$ on $\R^d\times\R^d$, also known as couplings or transport plans, such that
\b* 
\gamma\big[E\times \R^d\big] = \mu[E] \mbox{ and } \gamma\big[\R^d\times E\big] = \nu[E],\quad  \mbox{for all measurable sets } E\sb\R^d.
\e* 
 The Wasserstein metric of order $p$ is a distance function $\Wc_p:\Pc_p(\R^d)\times \Pc_p(\R^d)\to\R_+$ defined by 
\be 
\Wc_p(\mu,\nu) &:=& \inf_{\gamma\in\Gamma(\mu,\nu)}  \left(\int_{\R^d\times\R^d} |\bx-\by|^p\gamma(d\bx,d\by)\right)^{1/p}, \quad \mbox{for all } \mu, \nu\in\Pc_p(\R^d).
\ee 
It is known that $\Pc_p(\R^d)$ endowed with $\Wc_p$ is a Polish space, i.e. separable completely metrizable  space, see e.g. Theorem 6.18 of \cite{Villani}. In particular, an explicit  expression of $\Wc_p(\mu,\nu)$ is given for  $d=1$:
\be\label{eq:formula}
\Wc_p(\mu,\nu) &= & \left(\int_{0}^{1} \big|F_{\mu}^{-1}(t) - F_{\nu}^{-1}(t)\big|^p dt\right)^{1/p},
\ee 
where $F_{\mu}^{-1}(t):=\inf\{x\in\R: \mu[(-\infty, x]]>t\}$ and $F_{\nu}^{-1}(t):=\inf\{x\in\R: \nu[(-\infty, x]]>t\}$, see e.g. Chapter 2 of \cite{Santambrogio}. We note also that $\Wc_p$ is depending on $d$, i.e. $\Wc_p\equiv \Wc_{p,d}$. Nevertheless, we will not emphasize this dependency in the rest and write simply $\Wc_p$ without any danger of confusion.

\subsection{Projected Wasserstein Metrics}
\noindent While approaches based on the Wasserstein metric have been successful in several complex  tasks, estimating the Wasserstein distance often  suffers  from the curse of dimensionality from the complexity/algorithmic perspective. To tackle the issue of complexity, a sliced version of the Wasserstein distance was studied and employed, which only requires estimating distances of projected uni-dimensional  distributions and is, therefore, more efficient, see e.g. \cite{Bonnotte}, \cite{DHSPSKZFS},  \cite{KNSBR}. 

Let $\S^{d-1}\sb\R^d$ be the unit sphere, i.e. $\S^{d-1}:=\{\bv\in\R^d : |\bv|=1\}$. For $\mu\in\Pc_p(\R^d)$ and $\bv\in\S^{d-1}$, set $\mu_{\bv}:= \mu\circ \bv_*^{-1}$ to be the image measure of $\mu$ by $\bv_*$, where $\bv_*:\R^d\to\R$ is the map defined by $\bv_*(\bx):=\bv\cdot \bx$. Then $\mu_{\bv}\in\Pc_p(\R)$ since 
\b* 
\int_{\R}|x|^p\mu_{\bv}(dx) = \int_{\R^d}|\bv\cdot \bx|^p\mu(d\bx) \le \int_{\R^d}|\bx|^p\mu(d\bx).
\e* 
 Hence,  we may define the sliced Wasserstein metric $\sW_p$ and max-sliced Wasserstein metric $\mW_p$ as follows.
\begin{definition}
For $\mu,\nu\in\Pc_p(\R^d)$, set 
\b* 
\sW_p(\mu,\nu) :=  \left(\oint_{\S^{d-1}} \Wc_p(\mu_{\bv},\nu_{\bv})^pd\bv\right)^{1/p} &\quad\mbox{and}\quad& 
\mW_p(\mu,\nu) :=  \sup_{\bv\in\S^{d-1}} \Wc_p(\mu_{\bv},\nu_{\bv}),
\e* 
where $\oint$ denotes the surface integral over $\S^{d-1}$.
\end{definition}
For $d=1$, one has by definition 
 $\Wc_p(\mu,\nu)=2^{-1/p}\sW_p(\mu,\nu) =\mW_p(\mu,\nu)$. Further, Proposition \ref{prop:lip} ensures that $\sW_p, \mW_p$ are well defined metrics on $\Pc_p(\R^d)$.
\begin{proposition}\label{prop:lip}
{\rm (i)} Fix $\mu, \nu\in\Pc_p(\R^d)$. The maps   $\S^{d-1}\ni\bv\mapsto \mu_{\bv}\in \Pc_p(\R)$ and $\S^{d-1}\ni\bv\mapsto\Wc_p(\mu_{\bv},\nu_{\bv})\in\R_+$ are both Lipschitz, with respectively Lipschitz constants $M_p(\mu)$ and  $M_{p}(\mu)+M_{p}(\nu)$. In particular, the supremum over $\S^{d-1}$ can be attained and   $\mW_p(\mu,\nu)=  \max_{\bv\in\S^{d-1}} \Wc_p(\mu_{\bv},\nu_{\bv})$.

\vspace{1mm}

\noindent {\rm (ii)} $\sW_p$ and  $\mW_p$ form two distance functions on $\Pc_p(\R^d)$.
\end{proposition}
\begin{proof}
{\rm (i)} For all $\bu,\bv\in\S^{d-1}$, let  $\bu_*\otimes \bv_*:\R^d\times\R^d\to\R^2$ be the map defined by $\bu_*\otimes \bv_*(\bx,\by):=(\bu\cdot \bx, \bv\cdot \by)$. Taking  $\gamma(d\bx,d\by):=\mu(dx)\otimes \delta_{\bx}(d\by)
\in\Gamma(\mu,\mu)$, one has $\gamma_{\bu,\bv}:=\gamma \circ (\bu_*\otimes \bv_*)^{-1}\in \Gamma(\mu_{\bu},\mu_{\bv})$ and thus
\b* 
\Wc_p(\mu_{\bu},\mu_{\bv}) \le \left(\int_{\R^2}|x-y|^p\gamma_{\bu,\bv}(dx,dy)\right)^{1/p} = \left(\int_{\R^d\times\R^d}|\bu\cdot \bx-\bv\cdot \by|^p\gamma(dx,dy)\right)^{1/p} \le  |\bu-\bv| M_{p}(\mu).
\e* 
Further, the triangle inequality yields
$\big|\Wc_p(\mu_{\bu},\nu_{\bu}) - \Wc_p(\mu_{\bv},\nu_{\bv}) \big| \le \big|\Wc_p(\mu_{\bu},\mu_{\bv})  \big| +  \big|\Wc_p(\nu_{\bu},\nu_{\bv}) \big| \le  |\bu-\bv|\big(M_{p}(\mu)+M_{p}(\nu)\big)$, 
which yields the Lipschitz continuity and further $\mW_p(\mu,\nu)=  \max_{\bv\in\S^{d-1}} \Wc_p(\mu_{\bv},\nu_{\bv})$.

\vspace{1mm}

\noindent {\rm (ii)}  The symmetry and sub-additivity are trivial by definition. Denote by $A_d$ the surface area of $\S^{d-1}$, i.e. 
\be \label{def:area} 
A_d:=\oint_{\S^{d-1}}d\bv = \frac{2\pi^{d/2}}{\Gamma(d/2)},
\ee 
where $\Gamma:\R\to\R$ is the Gamma function given as
\b* 
\Gamma(x) &:=& \int_0^{\infty} t^{x-1}e^{-t}dt.
\e* 
Then one has $\sW_p(\mu,\nu)\le A_d^{1/p}\mW_p(\mu,\nu)$ for all $\mu,\nu\in\Pc_p(\R^d)$ and it suffices to show the identity of indiscernibles for $\sW_p$. Let $\sW_p(\mu,\nu)=0$, which implies by {\rm (i)} that $\mu_{\bv}=\nu_{\bv}$ for all $\bv\in\S^{d-1}$. Consider the characteristic functions of $\mu, \nu$ defined by
\b* 
    \tilde{\mu}(\bz) := \int_{\R^d} e^{i\bz\cdot \bx}\mu(d\bx) &\quad\mbox{and}\quad&
\tilde{\nu}(\bz):= \int_{\R^d} e^{i\bz\cdot \bx}\nu(d\bx).
\e* 
With $r:=|\bz|$ and $\bv:=\bz/r$, it follows that
\b* 
\int_{\R^d} e^{i\bz\cdot \bx}\mu(d\bx)  = \int_{\R} e^{irx}\mu_{\bv}(dx) = \int_{\R} e^{ir x}\nu_{\bv}(dx)=\int_{\R^d} e^{i\bz\cdot \bx}\nu(d\bx),
\e* 
which implies $\tilde{\mu}(\bz)=\tilde{\nu}(\bz)$ for all $\bz\in\R^d$ and finally  $\mu=\nu$.
\end{proof}

\subsection{Main Results}

\noindent Given the active theoretical interest of Wasserstein metric, as well as its importance for applications in practice, the investigation of $\sW_p$ and $\mW_p$ is gaining popularity in machine learning, with several applications to data sciences. A question of particular importance is the equivalence between $\sW_p, \mW_p$ and $\Wc_p$. Recently, Paty and Cuturi proved in \cite{PC} the strong equivalence of $\mW_2$ and $\Wc_2$. Namely,
\b* 
\mW_2(\mu,\nu)\le \Wc_2(\mu,\nu)\le \sqrt{d}\mW_2(\mu,\nu),\quad \mbox{for all } \mu,\nu\in\Pc_2(\R^d).
\e* 
In this paper, we show the (topological) equivalence between $\sW_p, \mW_p$ and $\Wc_p$ as well as the strong equivalence between $\mW_1$ and $\Wc_1$, which are summarized in Theorem  \ref{thm:equivalence} below.
\begin{theorem}\label{thm:equivalence}
{\rm (i)} $\sW_p$, $\mW_p$ and $\Wc_p$ are equivalent for all $p\ge 1$, i.e. 
\b* 
\lim_{n\to\infty}\sW_p(\mu^n,\mu)=0 \quad \Longleftrightarrow \quad \lim_{n\to\infty}\mW_p(\mu^n,\mu)=0 \quad \Longleftrightarrow \quad \lim_{n\to\infty}\Wc_p(\mu^n,\mu)=0
\e* 
for any sequence $(\mu^n)_{n\ge 1}\sb\Pc_p(\R^d)$ and $\mu\in\Pc_p(\R^d)$. 

\vspace{1mm}

\noindent {\rm (ii)} $\mW_1$ and $\Wc_1$ are strongly equivalent for all $d\ge 1$, i.e. there exists $C_d\ge 1$ such that
\be\label{eq:equiv}
\mW_1(\mu,\nu)\le \Wc_1(\mu,\nu)\le C_d\mW_1(\mu,\nu),\quad \mbox{for all } \mu,\nu\in\Pc_1(\R^d).
\ee  
{\rm (iii)} $\sW_1$ and $\Wc_1$ are not strongly equivalent for all $d\ge 2$.
\end{theorem}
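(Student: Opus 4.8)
One direction is elementary. Since $\bv_*:\R^d\to\R$ is $1$-Lipschitz, pushing an optimal coupling of $(\mu,\nu)$ forward by $(\bv_*,\bv_*)$ produces a coupling of $(\mu_\bv,\nu_\bv)$ of no larger cost, so $\Wc_p(\mu_\bv,\nu_\bv)\le\Wc_p(\mu,\nu)$ for every $\bv$; averaging over $\S^{d-1}$ also gives $\sW_p(\mu,\nu)\le c_d\,\mW_p(\mu,\nu)$ for a dimensional constant $c_d$. Hence $\Wc_p$-convergence implies $\mW_p$-convergence, which implies $\sW_p$-convergence. For the converse, assume $\sW_p(\mu^n,\mu)\to0$. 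From $M_p(\mu^n_\bv)=\Wc_p(\delta_0,\mu^n_\bv)\le M_p(\mu_\bv)+\Wc_p(\mu_\bv,\mu^n_\bv)$ and the Fubini identity $\oint M_p(\rho_\bv)^p\,d\bv=\kappa_{d,p}M_p(\rho)^p$ (valid for $\rho\in\Pc_p(\R^d)$, with $\kappa_{d,p}=\oint|\bv\cdot e_1|^p\,d\bv$), the sequence $(M_p(\mu^n))_n$ is bounded, hence $(\mu^n)_n$ is tight. Next, $\|\Wc_p(\mu^n_\bv,\mu_\bv)\|_{L^p(d\bv)}=\sW_p(\mu^n,\mu)\to0$ forces, along any subsequence and a.e.\ $\bv$ along a further one, $\mu^n_\bv\rightharpoonup\mu_\bv$; together with tightness and the continuity of the characteristic function in the direction variable this makes every weak limit point of $(\mu^n)_n$ equal to $\mu$, so $\mu^n\rightharpoonup\mu$. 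Finally $\|M_p(\mu^n_\bv)-M_p(\mu_\bv)\|_{L^p(d\bv)}\le\sW_p(\mu^n,\mu)\to0$ yields convergence of the $L^p(d\bv)$-norms, i.e.\ $M_p(\mu^n)\to M_p(\mu)$. Weak convergence together with convergence of $p$-th moments is equivalent to $\Wc_p$-convergence (see \cite{Villani}), which closes the cycle.

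\textbf{Part (ii).} The bound $\mW_1\le\Wc_1$ is contained in (i). For the reverse, start from Kantorovich--Rubinstein duality, $\Wc_1(\mu,\nu)=\sup\{\int f\,d(\mu-\nu):f\in\Lee_1(\R^d)\}$, and apply it once more in dimension one to get $\mW_1(\mu,\nu)=\sup\{\int g(\bv\cdot\bx)\,(\mu-\nu)(d\bx):\bv\in\S^{d-1},\,g\in\Lee_1(\R),\,g(0)=0\}$; note each admissible $\bx\mapsto g(\bv\cdot\bx)$ is itself $1$-Lipschitz on $\R^d$. Consequently, for a finite conic combination $h=\sum_{k=1}^na_kg_k(\bv_k\cdot\bx)$ with $a_k\ge0$, $g_k\in\Lee_1(\R)$, $g_k(0)=0$, one has $\int h\,d(\mu-\nu)\le(\sum_ka_k)\,\mW_1(\mu,\nu)$ and $\mathrm{Lip}(h)\le\sum_ka_k$. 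It therefore suffices to prove a \emph{quantitative density} statement: there is $C_d\ge1$, depending only on $d$, such that every $f\in\Lee_1(\R^d)$ can be approximated — in a topology $\tau$ under which $g\mapsto\int g\,d(\mu-\nu)$ is continuous for all $\mu,\nu\in\Pc_1(\R^d)$ — by conic combinations $h$ as above with $\sum_ka_k\le C_d$. Granting this, testing against $\mu-\nu$ and taking the supremum over $f\in\Lee_1(\R^d)$ gives $\Wc_1(\mu,\nu)\le C_d\,\mW_1(\mu,\nu)$.

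The topology $\tau$ should be fine enough to carry the functionals $f\mapsto\int f\,d(\mu-\nu)$ — for which, since $|f(\bx)-f(\0)|\le\mathrm{Lip}(f)|\bx|$ and $\int|\bx|\,(\mu+\nu)(d\bx)<\infty$, uniform convergence on compacts together with a weight controlling linear growth already suffices on families of uniformly Lipschitz functions — yet coarse enough that an explicit superposition argument works on a $\tau$-dense subclass of smooth, mildly decaying functions; the model identity is $|\bx|=\kappa_{d,1}^{-1}\oint|\bv\cdot\bx|\,d\bv$, which exhibits the Euclidean norm as a superposition of the $1$-Lipschitz profiles $t\mapsto|t|$ of total mass $\kappa_{d,1}^{-1}|\S^{d-1}|$, and which one discretizes into a finite conic combination. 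The crux is to run such a representation for a general (mollified) $f$ — via the Fourier slice / Radon inversion formula — while bounding the accumulated coefficients $\sum_ka_k$ by a dimensional multiple of $\mathrm{Lip}(f)$, and then to pass to the $\tau$-closure. \textbf{This uniform control of $\sum_ka_k$ over all $1$-Lipschitz $f$ on the unbounded space $\R^d$ is the main obstacle}: Lipschitz regularity interacts badly with the $(d-1)$-fold smoothing built into Radon inversion, so the decomposition must be organized so that the dimensional constant absorbs this loss independently of $f$. If instead $\mu,\nu$ are supported in a fixed bounded set, the difficulty disappears: $\Lee_1$-functions are then uniformly bounded there, the ridge functions separate points, and Cybenko-type universal approximation \cite{Cybenko} or the Stone--Weierstrass theorem gives density directly, with no need for $\tau$.

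\textbf{Part (iii).} It suffices, for each $d\ge2$, to exhibit a sequence $(\mu^n,\nu^n)\subset\Pc_1(\R^d)$ with $\Wc_1(\mu^n,\nu^n)/\sW_1(\mu^n,\nu^n)\to\infty$ (for $d=1$ the two metrics coincide, so the restriction $d\ge2$ is essential). Such families are provided by \cite{AGT}: its sharp estimates show that on $\R^d$, already on a bounded set, $\Wc_1$ can be controlled by $\sW_1$ only up to a Hölder exponent $\theta=\theta(d)<1$ — the natural value being $\theta=1/(d+1)$, cf.\ \cite{Bonnotte} — and that this exponent cannot be improved; along the associated near-extremal family $\Wc_1(\mu^n,\nu^n)\asymp\sW_1(\mu^n,\nu^n)^{\theta}$ with $\sW_1(\mu^n,\nu^n)\to0$, so the ratio diverges like $\sW_1(\mu^n,\nu^n)^{\theta-1}$. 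The underlying mechanism is the unboundedness of the inverse Radon transform, absent when $d=1$. It then only remains to note that the construction can be taken with compactly supported $\mu^n,\nu^n$, hence in $\Pc_1(\R^d)$, and to record the divergence. This is consistent with (ii): strong equivalence holds against $\mW_1$, the supremum over directions, but not against $\sW_1$, the average, because $\mW_1/\sW_1$ is itself unbounded.
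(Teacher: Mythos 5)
Your part (i) is correct and close in spirit to the paper's argument: the easy chain $\sW_p\lesssim\mW_p\le\Wc_p$ is the same, and your finish (tightness from the Fubini identity for projected moments, identification of the weak limit via projections, and then weak convergence plus convergence of $p$-th moments, which is equivalent to $\Wc_p$-convergence) is a legitimate variant of the paper's route via uniform integrability of $(|X^n|^p)$ and the Skorokhod representation.

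Part (ii), however, is not a proof. You reduce correctly to the quantitative statement that every $f\in\Lee_1(\R^d)$ can be approximated, in a topology making $g\mapsto\int g\,d(\mu-\nu)$ continuous, by conic combinations $\sum_k a_k g_k(\bv_k\cdot\bx)$ with $\sum_k a_k\le C_d$, but you then explicitly label the uniform control of $\sum_k a_k$ ``the main obstacle'' and leave it open; that bound \emph{is} the theorem. The paper does not obtain it constructively by Radon or Fourier-slice inversion (which, as you observe, interacts badly with the Lipschitz class on an unbounded domain); it obtains $C_d$ non-constructively through a soft functional-analytic chain: the topology $\tau$ whose continuous linear functionals are represented by pairs $(u,w)\in L^1\big(\R^d,(1+|\bx|)d\bx\big)\times L^1(\R^d)^d$ (Lemma~\ref{lem:dual}), Hahn--Banach plus a Fourier argument giving $\tau$-density of the ridge-function cone $\Dc$ (Proposition~\ref{prop:closure}), the uniform boundedness principle to get $\Lee(\R^d)=\bigcup_m m\overline{\Cc}$ (Proposition~\ref{prop:union}), $\|\cdot\|_{\lip}$-closedness of $\overline{\Cc}$ (Proposition~\ref{prop:closed}), and finally Baire's category theorem to extract a single $m^*$, hence $C_d$, with $\Lee_1(\R^d)\sb C_d\overline{\Cc}$ (Proposition~\ref{prop:inclusion}). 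This Baire-category step is the missing idea in your write-up; without it, or some substitute yielding the uniform coefficient bound, the inequality $\Wc_1\le C_d\mW_1$ does not follow. (There is also a smaller omitted step: the duality computation is first carried out for $\mu,\nu$ with densities in $L^1\big(\R^d,(1+|\bx|)d\bx\big)$, and one concludes by density of such measures in $\Pc_1(\R^d)$ for both metrics.)

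Part (iii) is likewise incomplete and misattributes its input. You cite \cite{AGT} for the sharpness of a H\"older exponent between $\sW_1$ and $\Wc_1$ (the exponent of \cite{Bonnotte}) and for the existence of near-extremal families, but you neither construct such a family nor verify the claimed asymptotics, and the cited work does not contain that sharpness statement. What the paper actually uses from \cite{AGT} is the empirical matching rate: for $\mu^n$ the empirical measure of $n$ i.i.d. uniform points on $[0,1]^2$, $\E[\Wc_1(\ell^2,\mu^n)]$ is of exact order $\sqrt{\log n/n}$ (and of order $n^{-1/d}$ for $d\ge 3$), whereas each one-dimensional projection satisfies $\E[\Wc_1(\ell^2_{\bv},\mu^n_{\bv})]\le L\,\E[\Wc_1(\ell^1,\mu^n_{\bv}\circ g_{\bv}^{-1})]=O(n^{-1/2})$ uniformly in $\bv$, after composing with a Lipschitz map $g_{\bv}$ carrying $\ell^2_{\bv}$ to $\ell^1$; averaging over $\S^1$ gives $\E[\sW_1(\ell^2,\mu^n)]=O(n^{-1/2})$, so the ratio diverges and strong equivalence fails. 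To complete your argument you must either reproduce such a concrete family with verified two-sided rates or actually prove the sharpness claim you assert.
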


 \begin{remark}\footnote{This observation is suggested the anonymous referee and provides a tractable schema to approximate $C_d^*$ by solving an optimization problem over the compact subset $\Pc_1(B_1)$.}
For every $R>0$, denote by $\Pc(B_R)\subset \Pc_1(\R^d)$ the subset of probability measures supported in $B_R:=\{\bx\in\R^d: |\bx|\le R\}$. Then one must have $0<C^*_d\le C_d$ such that  
\b*
\Wc_1(\mu,\nu) \le C^*_d\mW_1(\mu,\nu),\quad  \mbox{for all }  \mu,\nu\in\Pc(B_1).
\e*
 Let $\mu\in\Pc(B_R)$ for some $R>0$ and $X\sim \mu$. Denote by $\mu^R$ the distribution of $X/R$, then clearly $\mu^R\in\Pc(B_1)$. Further, for $\mu,\nu\in\Pc(B_R)$, one has $
\Wc_1(\mu,\nu) = R\Wc_1(\mu^R,\nu^R)$  and $\mW_1(\mu,\nu) = R\mW_1(\mu^R,\nu^R)$, 
which implies 
\b*
\Wc_1(\mu,\nu) \le C^*_d\mW_1(\mu,\nu), \quad \mbox{for all } \mu,\nu\in\bigcup_{R>0}\Pc(B_R).
\e*
and further $\Wc_1 \le C^*_d\mW_1$ on $\Pc_1(\R^d)$ as $\bigcup_{R>0}\Pc(B_R)$ is dense in $\Pc_1(\R^d)$ under $\Wc_1$ and $\mW_1$.
\end{remark}

\section{Proof of the Main Results}\label{sec:proof}

\subsection{Proof of Theorem \ref{thm:equivalence} {\rm (i)} }\label{ssec:proof1}
\noindent Given  $\bu,\bv\in\S^{d-1}$, let  $\bu_*\otimes \bv_*:\R^d\times\R^d\to\R^2$  be the map defined by 
\be\label{def:image}
\bu_*\otimes \bv_*(\bx,\by) &:=& (\bu\cdot \bx, \bv\cdot \by).
\ee
For $\mu,\nu\in\Pc_p(\R^d)$, let  $\gamma\in\Gamma(\mu,\nu)$ be an optimizer for $\Wc_p(\mu,\nu)$. Then, by definition, $\gamma_{\bu,\bv}:=\gamma\circ (\bu_*\otimes \bv_*)^{-1}\in\Gamma(\mu_{\bu},\nu_{\bv})$. Taking in particular $\bu=\bv$, one has 
\b* 
\Wc_p(\mu_{\bv},\nu_{\bv})^p \le \int_{\R^2}|x-y|^p\gamma_{\bv,\bv}(dx,dy) = \int_{\R^d\times \R^d}|\bv\cdot \bx-\bv\cdot \by|^p\gamma(d\bx,d\by)\le \Wc_p(\mu,\nu)^p,
\e* 
which yields the trivial inequality as follows:
\be\label{ineq1}
\frac{1}{A_d^{1/p}}\sW_p(\mu,\nu)\le \mW_p(\mu,\nu) \le \Wc_p(\mu,\nu), 
\ee
where $A_d$ is defined in \eqref{def:area}. Therefore, it remains to show the equivalence between $\sW_p$ and $\Wc_p$.

For the sake of presentation, we use the following notation for the fact that random variable $X$ is distributed according to probability measure $\mu$:
\b* 
X\sim \mu &\quad \mbox{or}\quad & \Lc(X) = \mu,
\e* 
where $\Lc(X)$ denotes the law of $X$.
\begin{proof}[of Theorem \ref{thm:equivalence} {\rm (i)}]
It suffices to prove that $\lim_{n\to\infty}\sW_p(\mu^n,\mu)=0$ implies $\lim_{n\to\infty}\Wc_p(\mu^n,\mu)=0$. We proceed as follows. 

\vspace{1mm}

\noindent \emph{Step 1.} For each $n\ge 1$, let $X^n$ be a random variable with $X^n\sim \mu^n$, and by definition, $\bv\cdot X^n\sim \mu^n_{\bv}$ holds for all  $\bv\in\S^{d-1}$. As
\b* 
\lim_{n\to\infty} \oint_{\S^{d-1}} \Wc_p(\mu^n_{\bv},\mu_{\bv})^pd\bv = \lim_{n\to\infty}  \sW_p(\mu^n_{\bv},\mu_{\bv})^p =0, 
\e* 
the functions $\S^{d-1}\ni\bv\mapsto  \Wc_p(\mu^n_{\bv},\mu_{\bv})^p\in\R_+$ converge in measure to zero.  Namely, for almost every $\bv\in\S^{d-1}$,   $\lim_{n\to\infty}\Wc_p(\mu_{\bv}^n,\mu_{\bv})=0$. We can further conclude that the sequence $\big(|\bv\cdot X^n|^p\big)_{n\ge 1}$ is uniformly integrable. Pick a finite set $\{\bv_1,\ldots, \bv_{I}\}\sb\S^{d-1}$ such that
\b* 
\lim_{n\to\infty}\Wc_p(\mu_{\bv_i}^n,\mu_{\bv_i})=0 \mbox{ for } 1\le i\le I &\quad \mbox{and}\quad& |\bx|/2\le \max_{1\le i\le I}|\bv_i\cdot \bx| \mbox{ for all } \bx\in\R^d.
\e* 
Hence, $|X^n|^p \le 2^p\sum_{i=1}^I|\bv_i\cdot X^n|^p$ yields the uniform integrability of $\big(|X^n|^p\big)_{n\ge 1}$ and in particular 
\b* 
\sup_{n\ge 1} M_p(\mu^n) = \sup_{n\ge 1} \left(\int_{\R^d}|\bx|^p\mu^n(d\bx)\right)^{1/p} = \sup_{n\ge 1} \left(\E\big[|X^n|^p\big]\right)^{1/p} =: C < \infty.
\e* 
In view of the proof of Proposition \ref{prop:lip}, the maps $\S^{d-1}\ni\bv\mapsto  \Wc_p(\mu^n_{\bv},\mu_{\bv})^p\in\R_+$ are equi-Lipschitz with a uniform Lipschitz constant  $C+M_p(\mu)$, and thus 
\be \label{dircon}
\lim_{n\to\infty}\Wc_p(\mu_{\bv}^n,\mu_{\bv})=0,\quad \mbox{for all } \bv\in\S^{d-1}.
\ee  
\emph{Step 2.} Consider the characteristic function of $\mu$ given by
\b* 
\tilde{\mu}(\bz) &:=& \int_{\R^d} e^{i\bz\cdot \bx}\mu(d\bx),\quad \mbox{for all } \bz\in\R^d. 
\e* 
Define similarly $\tilde{\mu}^{n}$ for all $n\ge 1$. For every $\bz\in\R^d$, with $r:=|\bz|$ and $\bv:=\bz/r$, it holds
\b* 
\lim_{n\to\infty}\int_{\R^d} e^{i\bz\cdot \bx}\mu^{n}(d\bx)  = \lim_{n\to\infty}\int_{\R} e^{irx}\mu^{n}_{\bv}(dx) = \int_{\R} e^{ir x}\mu_{\bv}(dx)=\int_{\R^d} e^{i\bz\cdot \bx}\mu(d\bx),
\e* 
where the second equality follows from \eqref{dircon}. We conclude thus $(\mu^n)_{n\ge 1}$ converges weakly to $\mu$.

\vspace{1mm}

\noindent \emph{Step 3.} Using the Skorokhod representation theorem, we may assume without loss of generality that the sequence $(X^n)_{n\ge 1}$ converges almost surely. Denote by $X\sim\mu$ its limit. Combining with the uniform integrability of $\big(|X^n|^p\big)_{n\ge 1}$, one has 
\b* 
\lim_{n\to\infty}\Wc_p(\mu^n,\mu)^p \le \lim_{n\to\infty}\E\big[|X^n-X|^p\big] = 0, 
\e* 
which fulfils the proof. 
\end{proof}

\subsection{Proof of Theorem \ref{thm:equivalence} {\rm (ii)} }\label{ssec:proof2}
\noindent Our proof is based on the dual formulation of $\Wc_1$ and inspired by the proof of the universal approximation theorem. Let $L^0(\R^d)$ be space of Lebesgue measurable functions $f:\R^d\to\R$ and $\Lee(\R^d)\sb L^0(\R^d)$ be the subspace consisting of Lipschitz functions vanishing at the origin, i.e. $f\in\Lee(\R^d)$ if and only if $f(\0)=0$ and 
\b* 
\|f\|_{\lip} := \sup_{\bx\neq \by\in\R^d} \frac{|f(\bx)-f(\by)|}{|\bx-\by|}<\infty.
\e* 
For each $L>0$,  denote by $\Lee_L(\R^d)\sb \Lee(\R^d)$ the subset of functions $f$ with $\|f\|_{\lip}\le L$.
Then it follows by Kantorovich's duality that, see e.g. Remark 6.5 in \cite{Villani}, 
\be\label{def:dual}
\Wc_1(\mu,\nu) &=& \sup_{f\in\Lee_1(\R^d)} \left\{\int_{\R^d}f(\bx)\mu(d\bx) - \int_{\R^d}f(\bx)\nu(d\bx)\right\},\quad \mbox{for all } \mu,\nu\in\Pc_1(\R^d).
\ee 
In what follows, \eqref{def:dual} will be used in the proof of Theorem \ref{thm:equivalence}. It is known from \cite{Johnson}, $\|\cdot\|_{\lip}$ defines a norm on $\Lee(\R^d)$ and $\big(\Lee(\R^d), \|\cdot\|_{\lip}\big)$ is a Banach space. Next we endow $\Lee(\R^d)$ with an alternative topology.  
Set
\b* 
L^1(\R^d)&:=&\left\{f\in L^0(\R^d):~ \|f\|_1:=\int_{\R^d}|f(\bx)|d\bx<\infty\right\} \\
L^{\infty}(\R^d)&:=&\left\{f\in L^0(\R^d) :~ \|f\|_{\infty}:={\rm ess} \sup_{\bx\in \R^d}|f(\bx)|<\infty\right\},
\e* 
then $\big(L^1(\R^d), \|\cdot\|_1\big)$ and $\big(L^{\infty}(\R^d), \|\cdot\|_{\infty}\big)$ are both  Banach spaces. Denote further by  $L^1(\R^d)^d$ (resp. $L^{\infty}(\R^d)^d$) the $d-$product of $L^1(\R^d)$ (resp. $L^{\infty}(\R^d)^d$). Note in particular that every $f\in \Lee(\R^d)$ is a.e. differentiable and $\nabla f\in  L^{\infty}(\R^d)^d$ with $\|\nabla f\|_{\infty}=\|f\|_{\lip}$, see e.g. Exercise 6.14 of \cite{Heinonen}. Finally we define $L^1\big(\R^d, (1+|\bx|)d\bx\big)\sb L^1(\R^d)$ by
\b* 
L^1\big(\R^d,(1+|\bx|)d\bx\big)&:=&\left\{f\in L^1(\R^d) :~  \int_{\R^d}(1+|\bx|)|f(\bx)|d\bx<\infty\right\}. 
\e* 
Now we are ready to introduce the topology, denoted by $\tau$, on $\Lee(\R^d)$ for which $\big(\Lee(\R^d), \tau\big)$ is a locally convex space. Let $\tau^o$ be a collection of subsets  $\Oc_{(u,w)}(f;\eps)\subset \Lee(\R^d)$ defined as follows:
\b* 
\Oc_{(u,w)}(f;\eps) 
&:=&\left\{g\in \Lee(\R^d):~ \left|\int_{\R^d}\big[(g-f)(\bx) u(\bx)+ \nabla (g-f)(\bx)\cdot w(\bx)\big]d\bx \right|<\eps\right\}, 
\e* 
where $f\in \Lee(\R^d)$, $\eps>0$ and $(u,w)\in L^1\big(\R^d, (1+|\bx|)d\bx\big)\times  L^1(\R^d)^d$. Then we define $\tau$ to be the topology generated by $\tau^o$.  $\big(\Lee(\R^d),\tau\big)$ is a locally convex space, as the origin has a local base of absolutely convex absorbent sets,  see e.g. Proposition 1.15 in Chapter 4 of \cite{Conway}. Under the topology $\tau$, $(f_n)_{n\ge 1}\sb\Lee(\R^d)$ converges to $f\in \Lee(\R^d)$  if and only if
\b* 
\lim_{n\to\infty}\int_{\R^d}\big[(f_n-f)(\bx) u(\bx)+ \nabla (f_n-f)(\bx)\cdot w(\bx)\big]d\bx &=&0
\e* 
holds for all $(u, w)\in L^1\big(\R^d, (1+|\bx|)d\bx\big)\times  L^1(\R^d)^d$.  \begin{remark}
{\rm (i)} In view of \cite{GL}, $\Lee(\R^d)$ is is the dual space of some closed quotient space of $L^1(\R^d)^d$ and $\tau$ turns to be the 
weak* topology. Hence, $\big(\Lee(\R^d),\tau\big)$ is not metrizable and thus not a Fr\'echet space as this quotient space is infinite dimensional, see e.g. Chapter 5 of \cite{Conway}. However, Proposition \ref{prop:closed} shows that $\tau$, restricted to $\Lee_L(\R^d)\sb \Lee(\R^d)$ for any $L>0$, is metrizable.

\vspace{1mm}

\noindent {\rm (ii)} For any sequence $(g_n)_{n\ge 1}\subset \Lee(\R^d)$ with $\lim_{n\to\infty}\|g_n\|_{\lip}=0$, one has for any $(u,w)\in L^1\big(\R^d, (1+|\bx|)d\bx\big)\times  L^1(\R^d)^d$
\b* 
\lim_{n\to\infty} \left|\int_{\mathbb R^d} g_n(\bx)u(\bx)d\bx\right| = \lim_{n\to\infty} \left|\int_{\mathbb R^d} \left(\int_0^1 \bx\cdot \nabla g_n(t\bx) dt\right) u(\bx)d\bx\right| \le \lim_{n\to\infty} \| g_n\|_{\lip}\int_{\mathbb R^d}  |\bx| u(\bx)d\bx=0
\e* 
and 
\b* 
\lim_{n\to\infty} \left|\int_{\mathbb R^d} \nabla g_n(\bx)\cdot w(\bx)d\bx\right| \le  \lim_{n\to\infty} \| g_n\|_{\lip}\int_{\mathbb R^d}  |w(\bx)|d\bx = 0.
\e* 
Therefore, the topology under $\|\cdot\|_{\lip}$ is strictly stronger than $\tau$. 
\end{remark}

 The lemma below characterizes the space of $\tau-$continuous linear functions on $\Lee(\R^d)$. 
\begin{lemma}\label{lem:dual}
Any $\tau-$continuous linear  functions $T: \Lee(\R^d)\to\R$ must be of the form 
\b* 
T(f) &=& \int_{\R^d}\big[f(\bx) u(\bx)+ \nabla f(\bx)\cdot w(\bx)\big]d\bx,
\e* 
for some $(u, w)\in L^1\big(\R^d, (1+|\bx|)d\bx\big)\times  L^1(\R^d)^d$.
\end{lemma}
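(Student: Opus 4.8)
The plan is to identify $\tau$ as the weakest topology on $\Lee(\R^d)$ making every functional $L_{(u,w)}(f):=\int_{\R^d}[f(\bx)u(\bx)+\nabla f(\bx)\cdot w(\bx)]d\bx$ continuous, the index $(u,w)$ ranging over $L^1(\R^d,(1+|\bx|)d\bx)\times L^1(\R^d)^d$, and then to apply the standard principle that the continuous dual of a vector space carrying such a weak topology is precisely the linear span of the generating functionals. Since, as noted below, the family $\{L_{(u,w)}\}$ is already a linear subspace $\Phi$ of the algebraic dual of $\Lee(\R^d)$, this yields exactly that every $\tau$-continuous $T$ coincides with some $L_{(u,w)}$.

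First I would record the elementary preliminaries. For $f\in\Lee(\R^d)$ one has $|f(\bx)|\le\|f\|_{\lip}|\bx|\le\|f\|_{\lip}(1+|\bx|)$ since $f(\0)=0$, and $|\nabla f|\le\|f\|_{\lip}$ almost everywhere; hence the integrand defining $L_{(u,w)}(f)$ is dominated by $\|f\|_{\lip}[(1+|\bx|)|u(\bx)|+|w(\bx)|]$, which lies in $L^1(\R^d)$, so $L_{(u,w)}$ is a well-defined linear functional on $\Lee(\R^d)$. The map $(u,w)\mapsto L_{(u,w)}$ is itself linear, hence its image $\Phi$ is a linear subspace of the algebraic dual. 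Writing $p_{(u,w)}(f):=|L_{(u,w)}(f)|$, a seminorm in $f$, one has $\Oc_{(u,w)}(f;\eps)=\{g\in\Lee(\R^d):p_{(u,w)}(g-f)<\eps\}$, so $\tau$ is the locally convex topology generated by the seminorms $\{p_{(u,w)}\}$, and a base of $\tau$-neighbourhoods of the origin is furnished by the finite intersections $\bigcap_{i=1}^n\{f\in\Lee(\R^d):|L_{(u_i,w_i)}(f)|<\eps\}$.

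The core of the argument is then as follows. Given a $\tau$-continuous linear $T:\Lee(\R^d)\to\R$, continuity at the origin provides $(u_i,w_i)_{i=1}^n$ and $\eps>0$ with $|T|\le 1$ on $\bigcap_{i=1}^n\{f:|L_{(u_i,w_i)}(f)|<\eps\}$; a rescaling argument (if $L_{(u_i,w_i)}(f)=0$ for every $i$ then $tf$ lies in that set for all $t\in\R$, forcing $|t|\,|T(f)|\le 1$) shows $\bigcap_{i=1}^n\ker L_{(u_i,w_i)}\sbe\ker T$. By the elementary linear-algebra lemma that a linear functional vanishing on the common kernel of finitely many linear functionals is a linear combination of them, $T=\sum_{i=1}^n c_i L_{(u_i,w_i)}$ for suitable $c_i\in\R$; and since $\Phi$ is a linear subspace, this equals $L_{(u,w)}$ with $(u,w):=\sum_{i=1}^n c_i(u_i,w_i)\in L^1(\R^d,(1+|\bx|)d\bx)\times L^1(\R^d)^d$, which is the asserted form. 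The converse, that each $L_{(u,w)}$ is $\tau$-continuous, is immediate from the definition of $\tau$.

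I do not expect a genuine difficulty here; the points that merit a little care are the domination estimate making $L_{(u,w)}$ well-defined on all of $\Lee(\R^d)$ --- i.e. that $\Phi$ genuinely sits inside the algebraic dual --- and the routine verification that a base of $\tau$-neighbourhoods of the origin consists of finite intersections of sets $\{f:|L_{(u_i,w_i)}(f)|<\eps\}$ with a common radius $\eps$. Should one prefer not to invoke the abstract weak-duality theorem, the self-contained argument through the common-kernel lemma sketched above can be written out in a few lines.
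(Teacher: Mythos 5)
Your proof is correct, but it takes a genuinely different and more elementary route than the paper. The paper proves the lemma by embedding $\Lee(\R^d)$ into $L^\infty(\R^d)^{d+1}$ via $f\mapsto\big(f/(1+|\bx|),\nabla f\big)$, identifying $\tau$-continuity of $T$ with $w^*$-continuity of the induced functional, invoking the fact (borrowed from Lemma 2.2 of \cite{GL}) that the image is a $w^*$-closed subspace cut out by weak equations, and then appealing to classical duality for $w^*$-topologies (the reference to \cite{Conway}) to represent the functional by an element of $L^1(\R^d)^{d+1}$; undoing the weight $1/(1+|\bx|)$ gives $u\in L^1\big(\R^d,(1+|\bx|)d\bx\big)$. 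You instead observe that $\tau$ is exactly the locally convex topology $\sigma\big(\Lee(\R^d),\Phi\big)$ generated by the linear family $\Phi=\{L_{(u,w)}\}$, and apply the standard theorem (continuity on a basic neighbourhood of $\0$, the rescaling argument, and the common-kernel lemma) that the continuous dual of a space carrying such a weak topology is precisely $\Phi$ itself; since $\Phi$ is the linear image of the parameter space, the representative $(u,w)$ is just the corresponding finite linear combination of parameters. Your argument is self-contained, avoids both the $w^*$-closedness input from \cite{GL} and the quotient/subspace duality from \cite{Conway}, and your preliminary domination estimate (using $|f(\bx)|\le\|f\|_{\lip}|\bx|$ and Rademacher's theorem for $\nabla f$) correctly justifies that each $L_{(u,w)}$ lies in the algebraic dual. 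The only interpretive point, which you handle correctly, is reading ``generated by the open balls'' as taking those balls as a subbase, so that finite intersections with a common $\eps$ form a neighbourhood base; this is consistent with the paper's stated convergence criterion and with its own proof. What the paper's heavier machinery buys is the explicit identification of $\big(\Lee(\R^d),\tau\big)$ with a concrete $w^*$-closed subspace of $L^\infty(\R^d)^{d+1}$, linking the construction to the known duality theory of Lipschitz spaces, but for the statement of the lemma itself your shorter argument suffices.
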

Let $\Cc\sb \Lee_1(\R^d)$ be the subset of functions $f$ of the form:
\b* 
f(\bx) &=& \sum_{1\le k\le n} a_k f_k(\bv_k\cdot \bx),
\e* 
where $n\in\N$, $a_k\in\R_+$, $\bv_k\in\S^{d-1}$,  $f_k\in \Lee_1(\R)$ for $1\le k\le n$  and  $\sum_{1\le k\le n} a_k = 1$. Define further $m\Cc:=\{af: |a|\le m \mbox{ and } f\in\Cc\}$ for each $m\ge 1$ and $\Dc:=\bigcup_{m\ge 1}m\Cc$. We denote by $\overline{\Dc}$ the $\tau-$closure of $\Dc$ in $\Lee(\R^d)$. Similarly, we define by $m\overline{\Cc}$ the $\tau-$closure of $m\Cc$. Then one has the following proposition.
\begin{proposition}\label{prop:closure}
{\rm (i)}  $\Dc$ is dense in $\Lee(\R^d)$ with respect to $\tau$, i.e. $\overline{\Dc}=\Lee(\mathbb{R}^d)$.

\vspace{1mm}

\noindent {\rm (ii)} Further,   $\Lee(\mathbb{R}^d) = \bigcup_{m\ge 1} m\overline{\Cc}$ holds. 
\end{proposition}
\begin{proof}
{\rm (i)} If $\overline{\Dc}\neq \Lee(\mathbb{R}^d)$, then by the Hahn–Banach theorem, see e.g. Corollary 3.15 in Chapter 4 of \cite{Conway}, there exists a non-zero $\tau-$continuous linear function $T:\Lee(\R^d)\to\R$ such that $T(f) = 0$ for all $f\in \Dc$, where 
\b* 
T(f)&=&\int_{\mathbb{R}^d} \big[f(\bx)u(\bx)+
\nabla f(\bx)\cdot{w(\bx)}\big]d\bx,\quad \mbox{for some } (u, w)\in L^1\big(\R^d, (1+|\bx|)d\bx\big)\times  L^1(\R^d)^d.
\e* 
Take the bump function $\varphi:\R^d\to\R_+$ defined by
\b* 
\varphi(\bx)&:=& \begin{cases} 
      c\exp\big(1/(|\bx|^2-1)\big) & \mbox{if } |\bx|\le 1 \\
      0 & \mbox{otherwise}, 
   \end{cases}
\e* 
where $c>0$ is chosen such that $\int_{\R^d}\varphi(\bx)d\bx=1$.
Define further the sequence $(\varphi_t)_{t>0}$ with $\varphi_t(\bx):=\varphi(\bx/t)/t^d$. For every $f\in\Dc$, one has the convolution $\varphi_t\ast f\in\Dc$ and thus
\be\label{eq:fourier}
0 = \int_{\R^d} \big[u(\bx) (\varphi_t\ast f)(\bx)+ w(\bx) \cdot{\nabla (\varphi_t\ast f)(\bx)}\big] d\bx 
=  \int_{\R^d} \big[u\ast{\varphi_t}(\bx) - {\rm div}(w\ast{\varphi_t})(\bx)\big]f(\bx)d\bx, 
\ee 
where the integration by parts can be applied thanks  to the convolution. Taking respectively $f(\bx)=\cos(2\pi \bz\cdot \bx)$ and $f(\bx)=\sin(2\pi \bz\cdot \bx)$ for $\bz\in\R^d$, one deduces that the Fourier transform of $u\ast{\varphi_t} -{\rm div}(w\ast{\varphi_t})$ is identically  equal to zero, i.e.
\b* 
 \int_{\R^d} \big[u\ast{\varphi_t}(\bx) - {\rm div}(w\ast{\varphi_t})(\bx)\big]e^{-2i\pi \bz\cdot \bx}d\bx &=& 0,\quad \mbox{for all } \bz\in\R^d,  
\e* 
and thus $u\ast{\varphi_t} -{\rm div}(w\ast{\varphi_t})\equiv 0$. Therefore,  \eqref{eq:fourier} holds for any $f\in \Lee(\mathbb{R}^d)$. Further, 
\b* 
&& \left|\int_{\R^d} \Big[u(\bx) \big((\varphi_t\ast f)(\bx) -f (\bx)\big)+ w(\bx) \cdot\big({\nabla (\varphi_t\ast f)(\bx)-\nabla f(\bx)\big)}\Big] d\bx \right| \\
&\le & \int_{\R^d} |u(\bx)| \left(\int_{\R^d}\varphi(\by)\big |f(\bx-t\by)-f(\bx)\big |d\by\right)d\bx + \int_{\R^d} |w(\bx)| \left|\int_{\R^d}\varphi(\by)\nabla f(\bx-t\by)d\by -\nabla f(\bx)\right|d\bx.
\e* 
 Using the dominated convergence theorem, and the Lebesgue-Besicovitch differentiation theorem, see e.g. page 43 of \cite{Evans}, for the second term, one has $T(f)=0$ for any $f \in \Lee(\mathbb{R}^d)$, contradicting the fact that $T$ is not null.

\vspace{1mm}

\noindent {\rm (ii)} Let $f\in \Lee(\mathbb{R}^d)$. Then there exists a net $(f^{\lambda})_{\lambda}\sb \Dc$ such that $f^{\lambda}$ converges to $f$ under $\tau$. Hence, the continuous linear functions $F_{\lambda}:L^1(\R^d)^d\to\R$ defined by 
\b* 
F_{\lambda}(w):=\int_{\mathbb{R}^d}  \nabla f^{\lambda}(\bx)\cdot w(\bx) d\bx
\e* 
are pointwise bounded. By the uniform boundedness principle, it holds 
$\sup_{\lambda} \|f^{\lambda}\|_{\lip}=\sup_{\lambda} \|\nabla f^{\lambda}\|_{\infty} = \sup_{\lambda} \|F_{\lambda}\|<\infty$. Thus $f\in m\overline{\Cc}$ for any   
$m\ge \sup_{\lambda} \|f^{\lambda}\|_{\lip}$.
\end{proof}
\begin{proposition}\label{prop:closed}
$\overline{\Cc}$ is closed with respect to the norm $\|\cdot\|_{\lip}$.
\end{proposition}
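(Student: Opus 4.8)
The plan is to prove the $\|\cdot\|_{\lip}$-closedness of $\overline{\Cc}$ not by a direct sequential argument but by comparing the two topologies on $\Lee(\R^d)$: I will show that $\tau$ is coarser than the topology induced by the norm $\|\cdot\|_{\lip}$, and then invoke the elementary fact that a set which is closed for a coarser topology is automatically closed for any finer one. Since $\overline{\Cc}$ is $\tau$-closed by construction (it is the $\tau$-closure of $\Cc$), this would immediately give that $\overline{\Cc}$ is $\|\cdot\|_{\lip}$-closed.

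The main step is therefore to check that every generating ball $\Oc_{(u,w)}(f;\eps)$ of $\tau$ is $\|\cdot\|_{\lip}$-open. Fixing $(u,w)\in L^1\big(\R^d,(1+|\bx|)d\bx\big)\times L^1(\R^d)^d$, this ball is the preimage of $(-\eps,\eps)$ under the affine map $g\mapsto T_{(u,w)}(g)-T_{(u,w)}(f)$, where $T_{(u,w)}(g):=\int_{\R^d}\big[g(\bx)u(\bx)+\nabla g(\bx)\cdot w(\bx)\big]d\bx$, so it suffices to show that the linear functional $T_{(u,w)}$ is $\|\cdot\|_{\lip}$-continuous. I would establish this through the bound
\b*
|T_{(u,w)}(g)| &\le& \|g\|_{\lip}\left(\|u\|_{L^1(\R^d,(1+|\bx|)d\bx)}+\|w\|_1\right),
\e*
which rests on two facts about $g\in\Lee(\R^d)$: first, $g(\0)=0$ yields the linear-growth bound $|g(\bx)|\le\|g\|_{\lip}|\bx|\le\|g\|_{\lip}(1+|\bx|)$, so that $\int_{\R^d}|g(\bx)u(\bx)|d\bx\le\|g\|_{\lip}\|u\|_{L^1(\R^d,(1+|\bx|)d\bx)}$; second, $g$ is a.e. differentiable with $\|\nabla g\|_\infty\le\|g\|_{\lip}$, so that $\int_{\R^d}|\nabla g(\bx)\cdot w(\bx)|d\bx\le\|g\|_{\lip}\|w\|_1$. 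Once each $T_{(u,w)}$ is seen to be $\|\cdot\|_{\lip}$-continuous, each generating ball is $\|\cdot\|_{\lip}$-open, hence so is every $\tau$-open set (being a union of finite intersections of such balls); that is precisely the statement that $\tau$ is coarser than the norm topology. Taking complements, every $\tau$-closed subset of $\Lee(\R^d)$, in particular $\overline{\Cc}$, is closed for $\|\cdot\|_{\lip}$.

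I do not anticipate a genuine obstacle; the only point to be attentive to is that the weight $1+|\bx|$ built into the domain of the first coordinate $u$ is exactly what is needed to absorb the linear-growth bound for $g$ and keep $T_{(u,w)}$ controlled by a constant multiple of $\|g\|_{\lip}$ — had $u$ only been allowed to range over $L^1(\R^d)$, such a functional need not be $\|\cdot\|_{\lip}$-continuous and the comparison of topologies would break down. (Equivalently, the same estimate shows that $\|\cdot\|_{\lip}$-convergence implies $\tau$-convergence, so the two-topology comparison may also be phrased, and used, at the level of convergent nets.)
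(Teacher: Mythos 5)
Your key estimate is right: $|g(\bx)|\le\|g\|_{\lip}(1+|\bx|)$ and $\|\nabla g\|_{\infty}\le\|g\|_{\lip}$ do show that every generating functional $T_{(u,w)}$ with $(u,w)\in L^1\big(\R^d,(1+|\bx|)d\bx\big)\times L^1(\R^d)^d$ is $\|\cdot\|_{\lip}$-bounded, hence that $\tau$ is coarser than the norm topology. This is also, implicitly, the fact the paper uses; but the paper's route is different: it first metrizes $\tau$ on Lipschitz-bounded sets by a metric $\rho$ built from countable dense families of test pairs, notes that $\|f^n-f\|_{\lip}\to 0$ forces $\rho(f^n,f)\to 0$, and then runs a diagonal argument, choosing $g^n\in\Cc$ with $\rho(g^n,f^n)\le 1/n$. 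Your comparison-of-topologies argument is cleaner in that it needs no metric at all --- \emph{provided} $\overline{\Cc}$ means the genuine topological $\tau$-closure.

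That proviso is the one real gap relative to the paper's definitions: the paper defines $\overline{\Cc}$ (and $\overline{\Dc}$, $m\overline{\Cc}$) as the set of $\tau$-limits of \emph{sequences} from $\Cc$. Since $\tau$ is a weak-type topology generated by an uncountable family of functionals, it is not first countable, and a sequential closure need not be $\tau$-closed; so your premise ``$\overline{\Cc}$ is $\tau$-closed by construction'' requires justification under that definition. The repair is short: any $\tau$-limit of a sequence from $\Cc$ lies in $\Lee_1(\R^d)$ (test the gradients against $w\in L^1(\R^d)^d$ and use weak-$*$ lower semicontinuity of the $L^{\infty}$ bound, as in the uniform-boundedness step of Proposition~\ref{prop:union}), and on $\Lee_1(\R^d)$ the topology $\tau$ is metrizable (this is the paper's $\rho$), so there the sequential and topological closures of $\Cc$ coincide; alternatively, graft the paper's diagonal argument onto your observation that norm convergence implies $\tau$-convergence. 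With that point added, your proof is complete, and your remark that the weight $1+|\bx|$ is what makes the functionals norm-continuous correctly identifies where the argument would otherwise fail.
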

\begin{proof}
First, we show that the topology $\tau$ restricted to $\Lee_1(\R^d)\sb \Lee(\R^d)$ is metrizable. Since $L^1\big(\R^d, (1+|\bx|)d\bx\big)$ and $L^1(\R^d)^d$ are separable, we may take two dense subsets  $(u_i)_{i\ge 1}$ and $(w_j)_{j\ge 1}$ and define $\rho_{u_i,w_j}: \Lee(\R^d)\times \Lee(\R^d)\to\R_+$ by
\b* 
\rho_{u_i,w_j}(f,g) &:=& \left|\int_{\mathbb{R}^d} \big[(f-g)(\bx)u_i(\bx)+
\nabla (f-g)(\bx)\cdot{w_j(\bx)}\big]d\bx\right|.
\e* 
Then by a straightforward verification,  the distance $\rho: \Lee(\R^d)\times \Lee(\R^d)\to\R_+$ given by
\b* 
\rho(f,g)&:=&\sum_{i,j\ge 1}\frac{1}{2^{i+j}}\frac{\rho_{u_i,w_j}(f,g)}{1+\rho_{u_i,w_j}(f,g)}
\e* 
is consistent with the topology $\tau$ restricted on $\Lee_1(\R^d)$. Second, we prove $\overline\Cc\subset\Lee_1(\R^d)$. For any $f\in \overline\Cc$, one has a sequence $(f_n)_{n\ge 1}\subset \Cc$ converging to $f$ as $\tau$ is metrizable on $\Cc$. For any $w\in L^1(\R^d)^d$, it follows that
\b* 
\lim_{n\to\infty} \int_{\R^d}\nabla f_n(\bx)\cdot w(\bx)d\bx &=& \int_{\R^d}\nabla f(\bx)\cdot w(\bx)d\bx,
\e* 
which means that 
    $\nabla f_n$ converges to  $\nabla f$  
under the weak* topology of  $L^{\infty}(\R^d)^d$. Note also,  in view of the Banach-Alaoglu theorem,   $(\nabla f_n)_{n\ge 1}$ belongs to the unit ball $B_1^\infty\subset  L^\infty(\mathbb R^d)^{d}$ which is relatively compact with respect to the weak* topology. Then the uniqueness of the weak* limit yields $\nabla f\in B^\infty_1$, and thus $f\in \Lee_1(\R^d)$. Hence $\overline{\Cc}\subset  \Lee_1(\R^d)$. 

Let $f$ be in the closure of $\overline{\Cc}$ with respect to $\|\cdot\|_{\lip}$. Let $(f^n)_{n\ge 1}\sb\overline{\Cc}$ satisfying $\lim_{n\to\infty}\| \nabla (f^n- f)\|_\infty=\lim_{n\to\infty}\| f^n- f\|_{\lip}=0$, which implies in particular $ \lim_{n\to\infty}\rho(f^n,f)=0$ as $|f^n(\bx)-f(\bx)|\le \| f^n- f\|_{\lip}|\bx|$ for all $\bx\in\R^d$.   
For each $n\ge 1$, since $f^n \in \overline{\Cc}$, there exists $g^n\in\Cc$ such that $\rho(g^n,f^n)\le 1/n$. Then $\lim_{n\to\infty}\rho(g^n,f) \le \lim_{n\to\infty}\big(\rho(g^n,f^n) + \rho(f^n,f)\big)=0$, which concludes the proof.
\end{proof}
\begin{theorem}\label{prop:inclusion}
There exists a constant $C_d>0$ such that $\Lee_1(\R^d)\sb  C_d\overline{\Cc}$. 
\end{theorem}
\begin{proof}
In view of Propositions \ref{prop:closure} and \ref{prop:closed},  $\Lee(\R^d)= \bigcup_{m\ge 1} m\overline{\Cc}$ and  $m\overline{\Cc}$ is closed with respect to $\|\cdot\|_{\lip}$ for each $m\ge 1$. Now it follows from Baire's theorem that there must  exist $m^*\ge 1$ such that $m^*\overline{\Cc}$ has non-empty interior, i.e.  
\b* 
\big\{f\in \Lee(\R^d): \|f-f^*\|_{\lip}<\eps^*\big\} &\sb& m^*\overline{\Cc}, \quad \mbox{for some } f^*\in \Lee(\R^d) \mbox{ and } \eps^*>0. 
\e* 
By the proof of Proposition  \ref{prop:closure}, there exists $m_0\ge 1$ such that $f^*\in m_0\overline{\Cc}$. Thus one has 
\b* 
\big\{f\in \Lee(\R^d): \|f\|_{\lip}<\eps^*\big\} &\sb& (m^*+m_0)\overline{\Cc}.
\e* 
and further $\Lee_1(\R^d)\sb  C_d\overline{\Cc}$ with $C_d:=2(m^*+m_0)/\eps^*$.
\end{proof}
\begin{remark}
Proposition \ref{prop:closure} and Theorem \ref{prop:inclusion} form the basis to prove {\rm (ii)} of Theorem \ref{thm:equivalence}.   It is worth pointing out that, with a suitable adaptation, the proof of the universal approximation theorem allows to show the density of $\bigcup_{m\ge 1}m\Cc$ in $\Lee(\R^d)$ with respect to $\tau$. However, we cannot prove something similar to  Proposition \ref{prop:closure} {\rm (ii)} under the uniform norm. Therefore, Theorem \ref{prop:inclusion}, which proves that for the convex absorbing subset $\Lee_1(\R^d)$ there exists $m$ large enough such that $m\Cc$ is also dense in $\Lee_1(\R^d)$, needs additional arguments.
\end{remark}
Now we have all the ingredients  to prove Theorem \ref{thm:equivalence} {\rm (ii)}. 
\begin{proof}[of Theorem \ref{thm:equivalence} {\rm (ii)}]
We assume first $ \mu,\nu $ have densities $ u, v\in L^1\big(\R^d, (1+|\bx|)d\bx\big)$ with respect to the Lebesgue measure. Recall Kantorovich's dual formulation \eqref{def:dual}, and one has
\b* 
\Wc_{1}(\mu,\nu)&=& \sup_{f \in \Lee_1(\R^d)} \int_{\mathbb{R}^d} f(\bx) \big(u(\bx)-v(\bx)\big)d\bx \\ 
&\le& \sup_{f\in C_d\overline{\Cc}}  \int_{\mathbb{R}^d} f(\bx) \big(u(\bx)-v(\bx)\big)d\bx \\
&=& C_d\sup_{f\in\Cc}  \int_{\mathbb{R}^d} f(\bx) \big(u(\bx)-v(\bx)\big)d\bx,
\e*
where $C_d$ is the constant in Theorem  \ref{prop:inclusion}. As $\Cc$ is the collection of convex combinations of functions $f(\bv \cdot \bx)$ with $\bv\in\S^{d-1}$ and $f \in \Lee_1(\R)$, it follows that
\b* 
\Wc_{1}(\mu,\nu)
&\le & C_d\sup_{f\in\Cc}  \int_{\mathbb{R}^d} f(\bx) \big(u(\bx)-v(\bx)\big)d\bx\\
&=&C_d\sup_{f\in \Lee_1(\R), \bv\in\S^{d-1}} \int_{\mathbb{R}^d} f(\bv\cdot \bx) \big(u(\bx)-v(\bx)\big)d\bx \\
&=& C_d\sup_{\bv\in\S^{d-1}} \Wc_1(\mu_{\bv},\nu_{\bv})= C_d\mW_1(\mu,\nu).
\e* 
Hence, \eqref{eq:equiv} is established in view of Lemma \ref{lem:density}.
\end{proof}
\begin{lemma}\label{lem:density}
The subset of probability measures admitting a density is dense in $\Pc_1(\R^d)$  under $\Wc_1$ and $\mW_1$.
\end{lemma}
\begin{proof}
Fix an arbitrary $\mu\in\Pc_1(\R^d)$ and take the density function $\varphi:\R^d\to\R_+$ given by
\b* 
\varphi(\bx)&:=& \begin{cases} 
      c\exp\big(1/(|\bx|^2-1)\big) & \mbox{if } |\bx|\le 1 \\
      0 & \mbox{otherwise}, 
   \end{cases}
\e* 
where $c>0$ is chosen such that $\int_{\R^d}\varphi(\bx)d\bx=1$. Define the sequence of convolutions of measures $(\mu_t)_{t>0}$ by $\mu_t:=\mu\ast \nu_t$, where the probability measure $\nu_t$ is identified by its density function $\varphi_t(\bx):=\varphi(\bx/t)/t^d$. By construction $\mu_t$ admits a density, and it remains to estimate $\Wc_1(\mu_t,\mu)$ according to
\eqref{def:dual}. For each $f\in\Lee_1(\R^d)$, it holds
 \b* 
 \int_{\R^d}f(\bx)\mu_t(d\bx) - \int_{\R^d}f(\bx)\mu(d\bx) &:=&  \int_{\R^d\times\R^d}f(\bx+\by)\mu(d\bx)\varphi_t(\by)d\by - \int_{\R^d}f(\bx)\mu(d\bx) \\
  &=&  \int_{\R^d\times\R^d}\big(f(\bx+t\bz)-f(\bx)\big)\mu(d\bx)\varphi(\bz)d\bz \\
  &\le& t\int_{\R^d}\bz\varphi(\bz)d\bz \le t,
  \e*
  which implies $\Wc_1(\mu_t,\mu)\le t$ and thus the desired density under $\Wc_1$. The density under $\mW_1$ follows immediately from the inequality 
  $\mW_1(\mu_t,\mu)\le \Wc_1(\mu_t,\mu)$.
  \end{proof}
 
 %\begin{remark}
%We note that if all the probability measures under consideration are supported on a compact set, saying $B_R$, then we can apply directly the universal approximation theorem and deduce that the space of $1-$Lipschitz functions can be approximated by $m\Cc$ uniformly on $B_R$ for some $m\ge 1$. To the best of knowledge, there is no extension of the universal approximation theorem to the set of continuous functions on unbounded set. Therefore, we adopt the $\tau-$topology to derive the desired result. The fact that $C_1(\R^d)$ is included in the $\tau-$closure of $m^*\Cc$ for some $m^*$ can be seen as an extension of the universal approximation theorem with respect to a weaker topology.  
%\end{remark}

\subsection{Proof of Theorem~\ref{thm:equivalence} {\rm (iii)}}\label{ssec:proof3}  

%\begin{proof}[of Theorem \ref{thm:equivalence} {\rm (iii)}]
\noindent To complete the proof of Theorem \ref{thm:equivalence}, we need an auxiliary result. Given a generic probability measure $\mu\in \Pc_p(\R^d)$,  $\hmu^n$ is said to be its empirical measure (of order $n$) if 
\b* 
\hmu^n(d\bx) &=& \frac{1}{n}\sum_{k=1}^n\delta_{X^k}(d\bx), 
\e* 
where $(X^k)_{k\ge 1}$ is a sequence of i.i.d. random variables such that $X^k\sim \mu$ for all $k\ge 1$. By  Theorem 1 of \cite{FG}, there exist $C\big(p,d, M_p(\mu)\big)>0$ and $\chi_{p,d}:\N\to\R_+$ such that
\b* 
\E\big[\Wc_1(\mu,\hmu^n)\big] &\le&  C\big(p,d, M_p(\mu)\big)\chi_{p,d}(n),\quad \mbox{for all } n\ge 1.
\e* 
The function $\chi_{p,d}$ is specified in \cite{FG}, while we need to refer to \cite{GO} for the explicit expression of $C\big(p,d, M_p(\mu)\big)$. For $d=1$ and $p=3$, one has  
\be \label{eq:estimate1}
\E\big[\Wc_1(\mu,\hmu^n)\big] &\le&  \frac{17280\big(M_3(\mu)^3+1\big)}{\sqrt{n}} ,\quad \mbox{for all } n\ge 1.
\ee  
\begin{proof}[of Theorem \ref{thm:equivalence} {\rm (iii)}]
We argue by contradiction. Assume that there exists $C>0$ such that $\Wc_p(\mu,\nu)\le C\sW_p(\mu,\nu)$ for all $\mu,\nu\in\Pc_p(\R^d)$.
Let $\ell^d\in\Pc_p(\R^d)$ be the Lebesgue measure concentrated on $[0,1]^d$. Let $\big(G^n\equiv (G^n_1,\ldots, G^n_d)\big)_{n\ge 1}$ be a sequence of i.i.d. random variables distributed according to  $\ell^d$. Define $\hmu^n$ to be the empirical measure given by
\b* 
\hmu^n(d\bx) &:=& \frac{1}{n}\sum_{k=1}^n \delta_{G^k}(d\bx).
\e* 
For any two sequences $(a_n)_{n\ge 1}, (b_n)_{n\ge 1}\sb\R_+$, we say $a_n \approx b_n$ if there exists a constant $c>0$ such that 
\b* 
\frac{a_n}{c} \le b_n \le ca_n,\quad \mbox{for all } n\ge 1.
\e* 
Then it follows by \cite{AGT} that
\begin{equation}\label{eq:convergence}
    \E\big[\Wc_1(\ell^d,\hmu^n)\big]  \approx  \begin{cases} 
      \big(\log(n)/n\big)^{1/2} & \mbox{if } d=2 \\
      n^{-1/d} & \mbox{if } d\ge 3.
   \end{cases}
\end{equation}
On the other hand, one has by assumption
\b* 
\E\big[\Wc_1(\ell^d,\hmu^n)\big] \le    C\E\big[\sW_1(\ell^d,\hmu^n)\big] = C\oint_{\S^{d-1}}\E\big[\Wc_1(\ell^d_{\bv},\hmu^n_{\bv})\big]d\bv,
\e* 
where we recall that  $\ell^d_{\bv}$ and  $\hmu^n_{\bv}$ are the projections of $\ell^d$ and $\hmu^n$ along the direction $\bv=(v_1,\ldots, v_d)$. Substituting $\ell^d_{\bv}$ and $\hmu^n_{\bv}$ into \eqref{eq:estimate1}, one has 
\b* 
\E\big[\Wc_1(\ell^d_{\bv},\hmu^n_{\bv})\big] \le \frac{17280\big(M_3(\ell^d_{\bv})^3+1\big)}{\sqrt{n}} \le \frac{17280(d^3+1)}{\sqrt{n}},\quad \mbox{for all } \bv\in\S^{d-1}, 
\e* 
where the second inequality follows from $M_3(\ell^d_{\bv})^3 \le M_3(\ell^d)^3 \le d^3$. This yields
\b* 
\E\big[\Wc_1(\ell^d,\hmu^n)\big] &\le& \frac{17280CA_d\big(M_3(\ell^d_{\bv})^3+1\big)}{\sqrt{n}},
\e*  
and further, combined with \eqref{eq:convergence}, yields a contradiction for $d\ge 2$ and concludes the proof.
 \end{proof}

\appendix 

\section{}

\noindent We start by recalling some elementary ingredients from functional analysis. Given a topological vector space $\Ec$, we denote by  $\Ec^*$ its dual space in separating duality via a bilinear form. The $\mbox{weak}^*$ convergence, denoted by $w^*$, is the convergence on $\Ec^*$ induced by the elements of $\Ec$, i.e. $(e^*_n)_{n\ge 1}\sb \Ec^*$ converges to $e^*\in\Ec$ under $w^*$ if and only if
\b* 
\lim_{n\to\infty}\langle e^*_n, e \rangle &=& \langle e^*, e\rangle,\quad \mbox{for all } e\in\Ec.
\e* 
Endowed with $w^*$, the dual space of $\Ec^*$ is isometric to $\Ec$. In the following, we set  $\Ec=L^1(\R^d)^{d+1}$ and $\Ec^*=L^{\infty}(\R^d)^{d+1}$ which are respectively  the $(d+1)-$product of $L^{1}(\R^d)$ and $L^{\infty}(\R^d)$. 

\begin{proof}[of Lemma \ref{lem:dual}]
Note that 
 $\Lee(\R^d)$ embeds into the space $L^\infty(\mathbb{R}^d)^{d+1}$ via the map $\Lc: \Lee(\R^d)\to L^\infty(\mathbb{R}^d)^{d+1}$ defined by
 \b* 
 \Lc(f)&:=& \left(\frac{f}{1+|\bx|},\nabla f:=\big(\partial_1 f,\ldots,\partial_d f\big)\right).
 \e* 
For each function $T:\Lee(\R^d)\to\R$, define $T\circ \Lc^{-1}: \Lc\big(\Lee(\R^d)\big) \to\R$ by 
\b* 
\Tc\big(\Lc(f)\big) &:=& T(f).
\e* 
Then by definition, $T$ is  $\tau-$continuous linear if and only if $T\circ \Lc^{-1}$ is $w^*-$continuous linear. Following the arguments of Lemma 2.2 in \cite{GL}, $\Lc\big(\Lee(\R^d)\big)\sb L^\infty(\mathbb{R}^d)^{d+1}$,
 presented by weak equations
\b* 
\Lc\big(\Lee(\R^d)\big) &=&\Big\{ (g ,w)\in L^\infty(\mathbb{R}^d)^{d+1}:~  \partial_i\big((1+|\bx|)g \big)= w_i, \;  \partial_iw_j=\partial_jw_i, \mbox{ for } 1\le i,j\le d \Big\}
\e* 
is a $w^*-$closed subspace. Then the dual spaces of $\Lc\big(\Lee(\R^d)\big)$ is included in the dual space of  $L^1(\R^d)^{d+1}$, see e.g. page 129 in  \cite{Conway}. The proof is fulfilled by the fact that the elements $(g,w)$ of $L^1(\mathbb{R}^d)^{d+1}$ represent all $\tau-$continuous linear functions on  $\Lc\big(\Lee(\R^d)\big)$ via
\b* 
\Lc\big(\Lee(\R^d)\big)\ni \Lc(f) &\mapsto& \int_{\R^d}g(\bx)\frac{f(\bx)}{1+|\bx|}d\bx+\int_{\R^d} \nabla f(\bx) \cdot w(\bx)d\bx\in\R. 
\e* 
\end{proof}

\bibliographystyle{abbrv}
\bibliography{annot}

\begin{thebibliography}{10}

\bibitem{AGT}
L.~Ambrosio, F.~Glaudo, and D.~Trevisan.
\newblock On the optimal map in the $ 2 $-dimensional random matching problem.
\newblock {\em Discrete \& Continuous Dynamical Systems - A}, 39:7291, 2019.

\bibitem{ACB}
M.~Arjovsky, S.~Chintala, and L.~Bottou.
\newblock {W}asserstein {G}an.
\newblock arXiv:1701.07875, December 2017.

\bibitem{Bonnotte}
N.~Bonnotte.
\newblock {\em Unidimensional and evolution methods for optimal
  transportation}.
\newblock PhD thesis, University of Paris-Sud, 2013.

\bibitem{Conway}
J.~B. Conway.
\newblock {\em A Course in Functional Analysis}, volume~96 of {\em Graduate
  Texts in Mathematics}.
\newblock Springer-Verlag New York, 1985.

\bibitem{Cybenko}
G.~Cybenko.
\newblock Approximations by superpositions of sigmoidal functions.
\newblock {\em Mathematics of Control, Signals, and Systems}, 2(4):303--314,
  1989.

\bibitem{DHSPSKZFS}
I.~Deshpande, Y.~Hu, R.~Sun, A.~Pyrros, N.~Siddiqui, S.~Koyejo, Z.~Zhao, D.~A.
  Forsyth, and A.~G. Schwing.
\newblock Max-sliced {W}asserstein distance and its use for {GAN}s.
\newblock In {\em Conference on Computer Vision and Pattern Recognition}, 2019.

\bibitem{Evans}
L.~Evans and R.~Gariepy.
\newblock {\em Measure Theory and Fine Properties of Functions}.
\newblock Studies in Advanced Mathematics. CRC Press, 1991.

\bibitem{FG}
N.~Fournier and A.~Guillin.
\newblock On the rate of convergence in wasserstein distance of the empirical
  measure.
\newblock {\em Probability Theory with Related Fields}, 162:707--738, 2015.

\bibitem{GL}
G.~Godefroy and N.~Lerner.
\newblock Some natural subspaces and quotient spaces of $l^1$.
\newblock {\em Advances in Operator Theory}, 3(1):61--74, 2018.

\bibitem{GO}
G.~Guo and J.~Ob{\l}{\'o}j.
\newblock Computational methods for martingale optimal transport problems.
\newblock {\em Annals of Applied Probability}, 29(6):3311--3347, 2019.

\bibitem{Heinonen}
J.~Heinonen.
\newblock {\em Lectures on analysis on metric spaces}.
\newblock Universitext. Springer-Verlag, New York, 2001.

\bibitem{Johnson}
J.~Johnson.
\newblock Banach space lipschitz functions and vector-valued lipschitz
  functions.
\newblock {\em Bulletin of the American Mathematical Society},
  75(6):1334--1338, 1969.

\bibitem{KNSBR}
S.~{Kolouri}, K.~{Nadjahi}, U.~{Simsekli}, R.~{Badeau}, and G.~K. {Rohde}.
\newblock Generalized sliced {W}asserstein distances.
\newblock In {\em 33rd Conference on Neural Information Processing Systems},
  2019.

\bibitem{PC}
F.~Paty and M.~Cuturi.
\newblock Subspace robust {W}asserstein distances.
\newblock In {\em Proceedings of the 36th International Conference on Machine
  Learning}, 2019.

\bibitem{10.1007/978-3-642-24785-9_37}
J.~Rabin, G.~Peyr{\'e}, J.~Delon, and M.~Bernot.
\newblock {Wasserstein} barycenter and its application to texture mixing.
\newblock In A.~M. Bruckstein, B.~M. ter Haar~Romeny, A.~M. Bronstein, and
  M.~M. Bronstein, editors, {\em Scale Space and Variational Methods in
  Computer Vision}, pages 435--446, Berlin, Heidelberg, 2012. Springer Berlin
  Heidelberg.

\bibitem{RR1}
S.~T. Rachev and L.~R{\"u}schendorf.
\newblock {\em Mass Transportation Problems Volume 1: Theory}.
\newblock Probability and Its Applications. Springer-Verlag New York, 1998.

\bibitem{RR2}
S.~T. Rachev and L.~R{\"u}schendorf.
\newblock {\em Mass Transportation Problems Volume 2: Applications}.
\newblock Probability and Its Applications. Springer-Verlag New York, 1998.

\bibitem{Santambrogio}
F.~Santambrogio.
\newblock {\em Optimal transport for applied mathematicians}, volume~87 of {\em
  Progress in Nonlinear Differential Equations and their Applications}.
\newblock Birkh\"{a}user/Springer, Cham, 2015.
\newblock Calculus of variations, PDEs, and modeling.

\bibitem{Villani}
C.~Villani.
\newblock {\em Optimal transport: old and new}, volume 338 of {\em Grundlehren
  der Mathematischen Wissenschaften [Fundamental Principles of Mathematical
  Sciences]}.
\newblock Springer-Verlag, Berlin, 2009.

\end{thebibliography}

\affiliationone{% in this example, two authors share an institution
   Erhan Bayraktar, 
   University of Michigan\\
   \email{erhan@umich.edu}}
% Important: Do not put any empty line here.
\affiliationtwo{% in this example, one author has two addresses}
   Gaoyue Guo, 
   CentraleSup\'elec\\
   \email{gaoyue.guo@centralesupelec.fr}}
% Important: Do not put any empty line here.
% Use \affiliationthree{} for any address positioned under \affiliationone
% Use \affiliationfour{}  for any address positioned under \affiliationtwo
%\affiliationthree{

\end{document}